\author[P.~Leonetti]{Paolo Leonetti}
\address{Universit\`a ``Luigi Bocconi''\\Department of Statistics\\Milan, Italy}
\email{leonetti.paolo@gmail.com}
\urladdr{\url{https://sites.google.com/site/leonettipaolo/}}
\keywords{Cluster point, thinnable ideal, Erd{\H o}s--Ulam ideal, summable ideal, asymptotic density, logarithmic density, statistical convergence, ideal convergence.}
\subjclass[2010]{Primary: 40A35. Secondary: 11B05, 54A20.}
\title{Thinnable Ideals and Invariance of Cluster Points}
   \def\MR#1{}
\newtheorem{thm}{Theorem}[section]
\newtheorem{cor}[thm]{Corollary}
\newtheorem{prop}[thm]{Proposition}
\theoremstyle{definition} 
\newtheorem{defi}[thm]{Definition}
\let\olddefi\defi
\renewcommand{\defi}{\olddefi\normalfont}
\let\oldexample\example
\renewcommand{\example}{\oldexample\normalfont}
\newtheorem{rmk}[thm]{Remark}
\let\oldrmk\rmk
\renewcommand{\rmk}{\oldrmk\normalfont}
\providecommand{\MR}[1]{}
\providecommand{\MR}{\relax\ifhmode\unskip\space\fi MR }
\providecommand{\href}[2]{#2}
\begin{document}

\maketitle
\thispagestyle{empty}

\begin{abstract}
\noindent 
We define a class of so-called thinnable ideals $\mathcal{I}$ on the positive integers which includes several well-known examples, e.g., the collection of sets with zero asymptotic density, sets with zero logarithmic density, and several summable ideals. 
Given a sequence $(x_n)$ taking values in a separable metric space and a thinnable ideal $\mathcal{I}$, it is shown that the set of $\mathcal{I}$-cluster points of $(x_n)$ is equal to the set of $\mathcal{I}$-cluster points of almost all its subsequences, in the sense of Lebesgue measure.

Lastly, we obtain a characterization of ideal convergence, which improves the main result in [Trans. Amer. Math. Soc. \textbf{347} (1995), 1811--1819].
\end{abstract}


\section{Introduction}\label{sec:intro}

It is well known that the set of ordinary limit points of ``almost every'' subsequence of a real sequence $(x_n)$ coincides with the set of ordinary limit points of the original sequence, in the sense of Lebesgue measure, see Buck \cite{MR0009997}. In the same direction, we prove its analogue for ideal cluster points.

To this aim, let $\mathcal{I}$ be an ideal on the positive integers $\mathbf{N}$, that is, a family of subsets of $\mathbf{N}$ closed under taking finite unions and subsets of its elements. It is assumed that $\mathcal{I}$ contains the collection $\mathrm{Fin}$ of finite subsets of $\mathbf{N}$ and it is different from the whole power set of $\mathbf{N}$. Note that the collection of subsets with zero asymptotic density, i.e., 
$$
\mathcal{I}_{0}:=\left\{S\subseteq \mathbf{N}: \lim_{n\to \infty}\frac{|S\cap [1,n]|}{n}=0\right\},
$$
is an ideal. 
Let also $x=(x_n)$ be a sequence taking values in a topological space $X$. We denote by $\Gamma_x(\mathcal{I})$ the set of $\mathcal{I}$-cluster points of $x$, that is, the set of all $\ell \in X$ such that 
$$
\{n: x_n \in U\} \notin \mathcal{I}
$$
for all neighborhoods $U$ of $\ell$. Statistical cluster points (that is, $\mathcal{I}_{0}$-cluster points) of real sequences were introduced by Fridy \cite{MR1181163}, cf. also \cite{MR2463821, MR1416085, LMxyz}. 
However, it is worth noting that ideal cluster points have been studied much before under a different name. Indeed, as it follows by \cite[Theorem 4.2]{LMxyz}, they correspond to classical ``cluster points'' of a filter $\mathscr{F}$ on $\mathbf{R}$ (depending on $x$), cf. \cite[Definition 2, p.69]{MR1726779}.



As anticipated, the main question addressed here is 
to find suitable conditions on $X$ and $\mathcal{I}$ such that  
the set of $\mathcal{I}$-cluster points of a sequence $(x_n)$ is equal to the set of $\mathcal{I}$-cluster points of ``almost all'' its subsequences. Finally, we obtain a characterization of ideal convergence. Related results were obtained in \cite{MR0010616, MR0316930, MR1260176, 
Miller18, MR3469337, 
MR1924673}.

\section{Thinnability}\label{sec:thinnable}

Given $k \in \mathbf{N}$ and \emph{infinite} sets $A,B\subseteq \mathbf{N}$ with canonical enumeration $\{a_n: n \in \mathbf{N}\}$ and $\{b_n: n \in \mathbf{N}\}$, respectively, we write $A\le B$ if $a_n\le b_n$ for all $n \in \mathbf{N}$ and define
$$
A_B:=\{a_b: b \in B\} \,\,\text{ and }\,\,kA:=\{ka: a \in A\}.
$$

\begin{defi}\label{def:thinn}
An ideal $\mathcal{I}$ is said to be \emph{weakly thinnable} if $A_B \notin \mathcal{I}$ whenever $A\subseteq \mathbf{N}$ admits non-zero asymptotic density and $B\notin \mathcal{I}$.

If, in addition, also $B_A\notin \mathcal{I}$ and $X\notin \mathcal{I}$ whenever $X \le Y$ and $Y\notin \mathcal{I}$, then $\mathcal{I}$ is said to be \emph{thinnable}.
%
%
%
\end{defi}
\begin{defi}\label{def:thinn}
An ideal $\mathcal{I}$ is said to be \emph{strechable} if $kA \notin \mathcal{I}$ for all $k \in \mathbf{N}$ and $A\notin \mathcal{I}$.
\end{defi}
The terminology has been suggested from the related properties of finitely additive measures on $\mathbf{N}$ studied in \cite{MR1192311}. 
In this regard, $\mathrm{Fin}$ is thinnable and strechable. 

This is the case of several other ideals: 
\begin{prop}\label{thm:thinnable}
Let $f: \mathbf{N} \to (0,\infty)$ be a definitively non-increasing function such that $\sum_{n\ge 1}f(n)=\infty$. Define the summable ideal
$$
\mathcal{I}_f:=\left\{S\subseteq \mathbf{N}: \sum_{n \in S}f(n)<\infty\right\}.
$$
Then $\mathcal{I}_f$ is thinnable provided $\mathcal{I}_f$ is strechable.
%

In addition, suppose that 
\begin{equation}\label{eq:addlimit}
\liminf_{n\to \infty}\frac{\sum_{i \in [1,n]}f(i)}{\sum_{i \in [1,kn]}f(i)}\neq 0\,\,\,\text{ for all }k \in \mathbf{N}
\end{equation} 
and define the Erd{\H o}s--Ulam ideal
$$
\mathscr{E}_f:=\left\{S\subseteq \mathbf{N}: \lim_{n\to \infty}\frac{\sum_{i \in S\cap [1,n]}f(i)}{\sum_{i \in [1,n]}f(i)}=0\right\}.
$$
Then $\mathscr{E}_f$ is thinnable provided $\mathscr{E}_f$ is strechable.
\end{prop}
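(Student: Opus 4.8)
The plan is to verify directly the three clauses in the definition of thinnability for $\mathcal J\in\{\mathcal I_f,\mathscr E_f\}$: writing $A=\{a_1<a_2<\cdots\}$, $B=\{b_1<b_2<\cdots\}$, and assuming throughout that $A$ has asymptotic density $\delta>0$ and $B\notin\mathcal J$, these are (i) $X\notin\mathcal J$ whenever $X\le Y$ and $Y\notin\mathcal J$; (ii) $A_B\notin\mathcal J$; (iii) $B_A\notin\mathcal J$. Set $F(n):=\sum_{i\in[1,n]}f(i)$ and $F_S(n):=\sum_{i\in S\cap[1,n]}f(i)$, so that $S\notin\mathcal I_f$ iff $F_S(n)\to\infty$ and $S\notin\mathscr E_f$ iff $\limsup_n F_S(n)/F(n)>0$; in particular $B\notin\mathscr E_f$ also gives $\sum_{m\in B}f(m)=\infty$. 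The single fact powering everything is that, $f$ being definitively non-increasing, $f(a)\ge f(b)$ once $a\le b$ are both large; note this also forces $F(kn)\le(2k-1)F(n)$ for all large $n$, so the hypothesis \eqref{eq:addlimit} is at our disposal, though the route via (i) below will not require it. Clause (i) is immediate: if $x_j\le y_j$ for all $j$ then $f(x_j)\ge f(y_j)$ for $j\ge j_0$, so in the summable case $\sum_j f(x_j)\ge\sum_{j\ge j_0}f(y_j)=\infty$, and in the Erd{\H o}s--Ulam case $\{j:y_j\le n\}\subseteq\{j:x_j\le n\}$ yields $F_X(n)\ge F_Y(n)-\sum_{j<j_0}f(y_j)$, hence $\limsup_n F_X(n)/F(n)\ge\limsup_n F_Y(n)/F(n)>0$; neither strechability nor \eqref{eq:addlimit} enters here.

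For clause (ii) I would leverage (i). Since $|A\cap[1,n]|/n\to\delta$ we have $a_b/b\to1/\delta$, hence $a_b\le Cb$ for all large $b$, where $C:=\lceil1/\delta\rceil+1\in\mathbf{N}$. The $j$-th element of $A_B$ is $a_{b_j}$ and the $j$-th element of $CB$ is $Cb_j$, so $a_{b_j}\le Cb_j$ for all large $j$; deleting finitely many elements from each side turns this into a genuine inequality $A_B'\le CB'$ with $A_B\triangle A_B'$ and $CB\triangle CB'$ finite. Strechability gives $CB\notin\mathcal J$, hence $CB'\notin\mathcal J$, and clause (i) then yields $A_B'\notin\mathcal J$ and so $A_B\notin\mathcal J$. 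This is the only place strechability is used.

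For clause (iii) set $c_j:=f(b_j)$; then $(c_j)$ is eventually non-increasing and positive with $\sum_j c_j=\sum_{m\in B}f(m)=\infty$, the $j$-th element of $B_A$ is $b_{a_j}$, so $\sum_{m\in B_A}f(m)=\sum_{a\in A}c_a$ while, for $\mathscr E_f$, $F_{B_A}(m)=\sum_{a\in A,\,a\le|B\cap[1,m]|}c_a$ and $F_B(m)=\sum_{j\le|B\cap[1,m]|}c_j$. The key estimate is a dyadic block comparison: for large $k$ the set $A$ has at least $\tfrac\delta2 2^k$ elements in $(2^k,2^{k+1}]$, each contributing at least $c_{2^{k+1}}$ to $\sum c_a$, whereas the block $(2^{k+1},2^{k+2}]$ contributes at most $2^{k+1}c_{2^{k+1}}$ to $\sum c_j$; hence $\sum_{a\in A\cap(2^k,2^{k+1}]}c_a\ge\tfrac\delta4\sum_{j\in(2^{k+1},2^{k+2}]}c_j$ for all large $k$. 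Summing over $k\ge K$ gives $\sum_{a\in A}c_a=\infty$, i.e.\ $B_A\notin\mathcal I_f$; truncating the sum and absorbing the bounded initial segment (legitimate since $\sum_{j\le r}c_j\to\infty$) gives the quantitative form $\sum_{a\in A,\,a\le r}c_a\ge\gamma\sum_{j\le r}c_j$ for all large $r$, with $\gamma=\gamma(\delta)>0$. For $\mathscr E_f$, fixing $\varepsilon>0$ and infinitely many $m$ with $F_B(m)\ge\varepsilon F(m)$ and noting $|B\cap[1,m]|\to\infty$ along these $m$, the quantitative form gives $F_{B_A}(m)\ge\gamma F_B(m)\ge\gamma\varepsilon F(m)$ for infinitely many $m$, so $B_A\notin\mathscr E_f$.

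I expect the main obstacle to be clause (iii) for $\mathscr E_f$: one must make the dyadic estimate uniform in the truncation parameter $r$ and then couple it with the a priori sparse set of indices $m$ realizing $\limsup_n F_B(n)/F(n)>0$ --- the device of truncating both $B_A$ and $B$ at the \emph{same} integer $m$, so that $|B\cap[1,m]|$ becomes their common cutoff, is what makes this coupling succeed. Clauses (i) and (ii) are essentially bookkeeping, provided one is careful that the ``discard finitely many elements'' step leaves the domination hypothesis (i) applicable verbatim; and the whole scheme runs in parallel for $\mathcal I_f$ (where membership concerns divergence of $F_S$) and for $\mathscr E_f$ (where it concerns $\limsup F_S/F$).
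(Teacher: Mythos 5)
Your proof is correct. It runs on the same engine as the paper's --- positive density $\delta$ of $A$ forces $a_n\le Cn$ eventually for an integer $C>1/\delta$, after which strechability and the eventual monotonicity of $f$ do the work --- but it is organized differently in two places. First, you prove the domination clause (your (i)) up front and then deduce $A_B\notin\mathcal{J}$ from the element-wise bound $A_B\le CB$ (modulo finite sets) plus strechability; the paper instead argues $A_B\notin\mathcal{I}_f$ and $A_B\notin\mathscr{E}_f$ directly, and for $\mathscr{E}_f$ it invokes the extra hypothesis \eqref{eq:addlimit} to convert $o\left(\sum_{i\le x/k}f(i)\right)$ into $o\left(\sum_{i\le x}f(i)\right)$. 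Your route shows \eqref{eq:addlimit} is dispensable for this step once (i) is in hand (and your side remark that $F(kn)\le(2k-1)F(n)$ for large $n$, so that \eqref{eq:addlimit} is automatic for definitively non-increasing $f$, is also correct). Second, for $B_A$ the paper compares the sums $\sum_{n\equiv j\bmod k}f(b_n)$ across residue classes (the chain \eqref{eq:ineqprop}) to get $\sum_n f(b_{kn})=\infty$ and then dominates $b_{a_n}$ by $b_{kn}$; your dyadic-block count $|A\cap(2^k,2^{k+1}]|\ge\tfrac{\delta}{2}2^k$ replaces this, and its payoff is the uniform quantitative bound $\sum_{a\in A,\,a\le r}c_a\ge\gamma\sum_{j\le r}c_j$, which is precisely what the Erd\H{o}s--Ulam case needs and which the paper dispatches only with ``a reasoning similar to \eqref{eq:ineqprop}''. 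All three clauses check out, including the common-cutoff device $r=|B\cap[1,m]|$ that couples $F_{B_A}(m)$ to $F_B(m)$; the proposal is a complete and in places more self-contained proof of the proposition.
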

\begin{proof}
Let us suppose that $A=\{a_n: n\in \mathbf{N}\}$ admits asymptotic density $c>0$ and $B=\{b_n: n\in \mathbf{N}\}\notin \mathcal{I}_f$, that is, $\sum_{n\ge 1}f(b_n)=\infty$. Define the integer $k:=\lfloor 1/c\rfloor+1 \ge 2$ and note that $\sum_{n\ge 1}f(kb_n)=\infty$ by the fact that $\mathcal{I}_f$ is strechable. Then $a_n=\frac{1}{c}n(1+o(1))$ as $n\to \infty$, which implies
\begin{equation}\label{eq:eq1}
\textstyle \sum_{n\ge 1}f(a_{b_n})\ge O(1)+\sum_{n\ge 1}f(kb_{n})=\infty,
\end{equation}
i.e., $A_B \notin \mathcal{I}_f$, hence $\mathcal{I}_f$ is weakly thinnable. 
Moreover, observe that 
\begin{equation}\label{eq:ineqprop}
\sum_{n\equiv 1\bmod{k}}f(b_n) \ge \sum_{n\equiv 2\bmod{k}}f(b_n)\ge \cdots \ge \sum_{n\equiv 0\bmod{k}}f(b_n) \ge \sum_{\substack{n\equiv 1\bmod{k}, \\  n\neq 1}}f(b_n)
\end{equation}
and note that the first sum is finite if and only if the last sum is finite. Since $I\notin \mathcal{I}_f$, then all the above sums are infinite, which implies that 
$$
\sum_{n\ge 1}f(b_{a_n})\ge O(1)+\sum_{n\ge 1}f(b_{kn})=\infty,
$$
i.e., $B_A \notin \mathcal{I}_f$. Lastly, given infinite sets $X,Y \subseteq \mathbf{N}$ with $X\le Y$ and $X \in \mathcal{I}_f$, we have $\sum_{y \in Y}f(y)\le \sum_{x \in X}f(x)<\infty$. Therefore $\mathcal{I}_f$ is thinnable. 

The proof of the second part is similar, where \eqref{eq:eq1} is replaced by
$$
\textstyle \sum_{a_{b_n} \le x}f(a_{b_n}) \ge O(1)+\sum_{{b_n} \le x/k}f(k{b_n}).
$$
Moreover, $B\notin \mathscr{E}_f$ implies $kB \notin \mathscr{E}_f$ by the hypothesis of strechability, i.e.,
$$
\textstyle \sum_{{b_n} \le x/k}f(k{b_n}) \neq o\left(\sum_{i \le x/k}f(i)\right);
$$ 
thanks to \eqref{eq:addlimit}, we conclude that
$$
\textstyle \sum_{{b_n} \le x/k}f(k{b_n}) \neq o\left(\sum_{i \le x}f(i)\right),
$$
hence $A_B \notin \mathscr{E}_f$, which shows that $\mathscr{E}_f$ is weakly thinnable. In addition, we get
\begin{displaymath}
\frac{f(b_{a_1})+\cdots+f(b_{a_n})}{f(1)+\cdots+f(b_{a_n})}\ge \frac{O(1)+f(b_{k})+\cdots+f(b_{kn})}{f(1)+\cdots+f(b_{kn})} \not\to 0,
\end{displaymath}
so that $B_A \notin \mathscr{E}_f$, where the last $\not\to$ comes from a reasoning similar to \eqref{eq:ineqprop}. 
Finally, given infinite subsets $X,Y\subseteq \mathbf{N}$ with canonical enumeration $\{x_n:n \in \mathbf{N}\}$ and $\{y_n: n \in \mathbf{N}\}$, respectively, such that $X\le Y$ and $X \in \mathscr{E}_f$, it holds
$$
\frac{f(x_1)+\cdots+f(x_n)}{f(1)+\cdots+f(x_n)}\ge 
\frac{f(y_1)+\cdots+f(y_n)}{f(1)+\cdots+f(y_n)}  
$$
for all $n \in \mathbf{N}$ therefore $Y \in \mathscr{E}_f$.
\end{proof}

Given a real $\alpha \ge -1$, let $\mathcal{I}_\alpha$ be the collection of subsets with zero $\alpha$-density, that is,
\begin{equation}\label{eq:defalpha}
\mathcal{I}_\alpha:=\left\{S\subseteq \mathbf{N}: \mathrm{d}_\alpha^\star(S)=0\right\}, \,\,\text{ where }\,\,\mathrm{d}_\alpha^\star(S)=\limsup_{n\to \infty}\frac{\sum_{i \in S\cap [1,n]} i^\alpha}{\sum_{i \in [1,n]} i^\alpha}.
\end{equation}

\begin{prop}\label{prop:ialphaideals}
All ideals $\mathcal{I}_\alpha$ are thinnable.
\end{prop}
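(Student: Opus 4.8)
The plan is to reduce the whole family $\{\mathcal{I}_\alpha:\alpha\ge -1\}$ to only the two ideals $\mathcal{I}_0$ and $\mathcal{I}_{-1}$, which are Erd\H{o}s--Ulam ideals carrying a non-increasing weight, and then to invoke Proposition~\ref{thm:thinnable}. Here I write $c_n(S):=|S\cap[1,n]|$ and use throughout that $\sum_{i\le n}i^\alpha\sim n^{\alpha+1}/(\alpha+1)$ for $\alpha>-1$ (and $\sim\log n$ for $\alpha=-1$), and that $\mathrm{d}_0^\star(S)$ is just the upper asymptotic density of $S$.

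\emph{Step 1: $\mathcal{I}_\alpha=\mathcal{I}_0$ for every $\alpha>-1$.} If $c_n(S)=o(n)$, then $\sum_{s\in S\cap[1,n]}s^\alpha$ is at most $c_n(S)\,n^\alpha$ when $\alpha\ge 0$, and at most $\sum_{j\le c_n(S)}j^\alpha\sim c_n(S)^{\alpha+1}/(\alpha+1)$ when $-1<\alpha<0$ (since the $j$-th smallest element of $S$ is $\ge j$ and $t\mapsto t^\alpha$ is then decreasing); in either case this is $o(n^{\alpha+1})$, so $\mathrm{d}_\alpha^\star(S)=0$, giving $\mathcal{I}_0\subseteq\mathcal{I}_\alpha$. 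Conversely, if $\mathrm{d}_0^\star(S)=\delta>0$, choose $n_j\to\infty$ with $c_{n_j}(S)\ge \delta n_j/2$; then $\sum_{s\in S\cap[1,n_j]}s^\alpha$ is at least $c_{n_j}(S)\,n_j^\alpha\ge(\delta/2)\,n_j^{\alpha+1}$ when $-1<\alpha<0$ (each summand is $\ge n_j^\alpha$), and at least $\sum_{j\le c_{n_j}(S)}j^\alpha\gtrsim(\delta n_j/2)^{\alpha+1}/(\alpha+1)$ when $\alpha\ge 0$, whence $\mathrm{d}_\alpha^\star(S)>0$; this gives $\mathcal{I}_\alpha\subseteq\mathcal{I}_0$. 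So $\mathcal{I}_\alpha=\mathcal{I}_0$.

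\emph{Step 2: $\mathcal{I}_0$ and $\mathcal{I}_{-1}$ are thinnable.} Since the $\limsup$ in \eqref{eq:defalpha} vanishes exactly when the corresponding limit does, $\mathcal{I}_0=\mathscr{E}_f$ for the constant weight $f\equiv 1$ and $\mathcal{I}_{-1}=\mathscr{E}_f$ for $f(n)=1/n$; in both cases $f$ is positive and definitively non-increasing with $\sum_n f(n)=\infty$, and \eqref{eq:addlimit} holds because $\liminf_n n/(kn)=1/k>0$ and $\liminf_n H_n/H_{kn}=1>0$. Hence Proposition~\ref{thm:thinnable} applies once strechability is checked, which is elementary: $\mathrm{d}_0^\star(kA)=\tfrac1k\,\mathrm{d}_0^\star(A)$ because $|kA\cap[1,n]|=|A\cap[1,\lfloor n/k\rfloor]|$, and likewise $\mathrm{d}_{-1}^\star(kA)=\tfrac1k\,\mathrm{d}_{-1}^\star(A)$, using $\sum_{ka\le n,\,a\in A}(ka)^{-1}=\tfrac1k\sum_{a\in A,\,a\le n/k}a^{-1}$ together with $H_{\lfloor n/k\rfloor}/H_n\to 1$. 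Thus $\mathcal{I}_0$ and $\mathcal{I}_{-1}$ are thinnable, and by Step~1 so is every $\mathcal{I}_\alpha$: for $\alpha>-1$ it coincides with $\mathcal{I}_0$, and $\mathcal{I}_{-1}$ is the remaining case $\alpha=-1$.

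The only place needing a little care is Step~1, where one must split on the sign of $\alpha$ to pick the right monotonicity of $t\mapsto t^\alpha$ and then control $\sum_{i\le n}i^\alpha$ by its standard asymptotics; everything else is a routine verification against the definitions of (weakly) thinnable and strechable ideals and against the hypotheses of Proposition~\ref{thm:thinnable}.
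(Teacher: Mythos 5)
Your proof is correct, and for $\alpha>0$ it takes a genuinely different route from the paper's. The paper handles $\alpha\in[-1,0]$ the same way you do in spirit, by feeding the non-increasing weight $i^\alpha$ into Proposition~\ref{thm:thinnable}; but for $\alpha>0$ that weight is increasing, so the proposition is unavailable and the paper instead verifies thinnability of $\mathcal{I}_\alpha$ directly, proving the $X\le Y$ clause via a counting argument with the constant $\lambda=1-\left(1-\tfrac12\mathrm{d}_\alpha^\star(Y)\right)^{1/(\alpha+1)}$ and then estimating $\mathrm{d}_\alpha^\star(A_B)$ and $\mathrm{d}_\alpha^\star(B_A)$ by hand from $a_n=\tfrac1c n(1+o(1))$. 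You bypass all of this by establishing the stronger structural fact that $\mathcal{I}_\alpha=\mathcal{I}_0$ for every $\alpha>-1$; your two-sided comparison in Step~1 is sound (the split on the sign of $\alpha$ to choose the correct monotonicity of $t\mapsto t^\alpha$, the use of $s_j\ge j$ for the $j$-th smallest element, and the asymptotics $\sum_{i\le n}i^\alpha\sim n^{\alpha+1}/(\alpha+1)$ are exactly what is needed), and since thinnability and strechability depend only on the ideal as a family of sets, the whole problem collapses to $\mathcal{I}_0$ and $\mathcal{I}_{-1}$, both Erd\H{o}s--Ulam ideals with non-increasing weights for which \eqref{eq:addlimit} and strechability are immediate. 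Your argument is shorter and more informative, since it exhibits $\{\mathcal{I}_\alpha\}_{\alpha>-1}$ as a single ideal in disguise; what it gives up is the self-contained quantitative lower bound $\mathrm{d}_\alpha^\star(A_B)\ge\kappa^2\,\mathrm{d}_\alpha^\star(B)$ that the paper's direct computation produces, and it leans entirely on Proposition~\ref{thm:thinnable}, whereas the paper only delegates the range $\alpha\in[-1,0]$ to it.
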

\begin{proof}
If $\alpha \in [-1,0]$, the claim follows by Proposition \ref{thm:thinnable} (we omit details). Hence, let us suppose hereafter than $\alpha>0$.  
Fix infinite sets $X,Y \subseteq \mathbf{N}$ with canonical enumerations $\{x_n: n \in \mathbf{N}\}$ and $\{y_n: n \in \mathbf{N}\}$, respectively, such that $Y \notin \mathcal{I}_\alpha$. Then, there exist an infinite set $S$ such that $|Y \cap [1,y_n]|\ge \lambda y_n$ for all $n \in S$, where $\lambda:=1-\left(1-\frac{1}{2}\mathrm{d}_\alpha^\star(Y)\right)^{\frac{1}{\alpha+1}}>0$. Indeed, in the opposite case, we would have that
$$
\frac{\alpha+1}{y_n^{\alpha+1}}\sum_{i\le n}y_i^\alpha \le \frac{\alpha+1}{y_n^{\alpha+1}}\sum_{i \in ((1-\lambda)y_n,y_n]}i^\alpha \le \left(1-(1-\lambda)^{\alpha+1}\right))(1+o(1))<\frac{2}{3}\mathrm{d}_\alpha^\star(Y)
$$
for all sufficiently large $n$. Since $|Y \cap [1,n]| \le |X \cap [1,n]|$ for all $n$, we conclude that
$$
\frac{1}{x_n^{\alpha+1}}\sum_{i\le n}x_i^\alpha \ge \frac{1}{x_n^{\alpha+1}}\sum_{i\le \lambda y_n}i^\alpha \ge \frac{1}{x_n^{\alpha+1}}\sum_{i\le \lambda x_n}i^\alpha \ge \frac{\lambda^{\alpha+1}}{2}
$$
for all large $n \in S$, so that $X \notin \mathcal{I}_\alpha$.

At this point, fix sets $A,B \subseteq \mathbf{N}$ 
with canonical enumerations $\{a_n: n \in \mathbf{N}\}$ and $\{b_n: n \in \mathbf{N}\}$, respectively, 
such that $A$ admits asymptotic density $c>0$ and $B \notin \mathcal{I}_\alpha$. 
Fix also $\varepsilon>0$ sufficiently small and note that there exists $n_0=n_0(\varepsilon) \in \mathbf{N}$ such that $(\nicefrac{1}{c}-\varepsilon)n \le a_n \le (\nicefrac{1}{c}+\varepsilon)n$ for all $n\ge n_0$. In particular, it follows that
$$
\frac{1}{a_{b_n}^{\alpha+1}}\sum_{k\le n}\left(a_{b_k}\right)^\alpha \ge \frac{1}{\left(\frac{1}{c}+\varepsilon\right)^{\alpha+1}{b_n}^{\alpha+1}}\left(O(1)+\sum_{n_0\le k\le n}\left(\frac{1}{c}-\varepsilon\right)^{\alpha}{b_k}^{\alpha}\right).
$$
Therefore, setting $\kappa:=\min\left\{\left(\frac{1}{c}+\varepsilon\right)^{-\alpha-1}, \left(\frac{1}{c}-\varepsilon\right)^{\alpha}\right\}>0$, we obtain
\begin{displaymath}
\begin{split}
\frac{\mathrm{d}_\alpha^\star(A_B)}{\alpha+1}&=\limsup_{n\to \infty}\frac{1}{a_{b_n}^{\alpha+1}}\sum_{k\le n}\left(a_{b_k}\right)^\alpha \ge \limsup_{n\to \infty}\frac{\kappa}{{b_n}^{\alpha+1}}\left(O(1)+\sum_{n_0\le k\le n}\kappa{b_k}^{\alpha}\right) \\
&= \kappa^2\limsup_{n\to \infty}\frac{1}{{b_n}^{\alpha+1}}\sum_{n_0\le k\le n}{b_k}^{\alpha}= \kappa^2 \,\frac{\mathrm{d}_\alpha^\star(B)}{\alpha+1}>0.
\end{split}
\end{displaymath}
This proves that $A_B \notin \mathcal{I}_\alpha$. Finally, let $k$ be an integer greater than $\nicefrac{1}{c}$ and note that $B_A \le B_{k\mathbf{N}}\setminus S$, for some finite set $S$. By the previous observation, it is sufficient to show that $B_{k\mathbf{N}} \notin \mathcal{I}_\alpha$ and this is straightforward by an analogous argument of \eqref{eq:ineqprop}.
\end{proof}

To mention another example, let $\mathcal{I}_{\mathfrak{p}}$ be the \emph{P\'olya ideal}, i.e., 
$$
\mathcal{I}_{\mathfrak{p}}:=\left\{S\subseteq \mathbf{N}: \mathfrak{p}^\star(S)=0\right\}, \,\,\text{ where }\,\,\mathfrak{p}^\star(S)=\lim_{s \to 1^-} \limsup_{n \to \infty} \frac{|S \cap [ns,n]|}{(1-s)n}.
$$
Among other things, the upper P\'olya density $\mathfrak p^\ast$ has found a number of remarkable applications in analysis and economic theory, see e.g. \cite{MR1545027}, \cite{MR0003208} and \cite{MR1656470}.

\begin{cor}\label{cor:polya}
The P\'olya ideal $\mathcal{I}_{\mathfrak{p}}$ is thinnable.
\end{cor}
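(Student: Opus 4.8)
The plan is to reduce everything to the ideal $\mathcal{I}_0$ of sets of zero asymptotic density (that is, $\mathcal{I}_\alpha$ with $\alpha=0$ in the notation of \eqref{eq:defalpha}): the key point is that $\mathcal{I}_{\mathfrak{p}}=\mathcal{I}_0$, after which the Corollary is immediate from Proposition \ref{prop:ialphaideals}. Since thinnability depends only on which subsets of $\mathbf{N}$ belong to the ideal, the set-theoretic identity $\mathcal{I}_{\mathfrak{p}}=\mathcal{I}_0$ is genuinely all that is needed here, even though the two density functions $\mathfrak{p}^\star$ and $\mathrm{d}_0^\star$ do not coincide in general.

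One inclusion is trivial: if $\mathrm{d}_0^\star(S)=0$, then for each fixed $s\in(0,1)$ one has $|S\cap[ns,n]|\le |S\cap[1,n]|=o(n)=o((1-s)n)$ as $n\to\infty$, so $\limsup_n \frac{|S\cap[ns,n]|}{(1-s)n}=0$ and hence $\mathfrak{p}^\star(S)=0$; thus $\mathcal{I}_0\subseteq\mathcal{I}_{\mathfrak{p}}$. For the reverse inclusion I would prove the quantitative estimate $\mathfrak{p}^\star(S)\ge \frac12\mathrm{d}_0^\star(S)$. Assume $\mathrm{d}_0^\star(S)=d>0$, fix $s\in(0,1)$, and pick an integer $L$ with $s^L<d/2$. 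For each of the infinitely many $N$ with $|S\cap[1,N]|\ge dN$, decompose the interval $(\lfloor s^L N\rfloor, N]$ into the $L$ pairwise disjoint blocks $W_\ell:=(\lfloor s^{\ell+1}N\rfloor,\lfloor s^{\ell}N\rfloor]$ for $\ell=0,1,\dots,L-1$, each of which agrees, up to $O(1)$ integers, with a window of the form $[s m_\ell, m_\ell]$ where $m_\ell:=\lfloor s^{\ell}N\rfloor$. Since $|S\cap(\lfloor s^L N\rfloor,N]|\ge dN-s^L N\ge \frac{d}{2}N$ while $\sum_\ell |W_\ell|\le N$, the pigeonhole principle (the largest of finitely many ratios is at least the ratio of the sums) yields an index $\ell^\ast=\ell^\ast(N)$ with $|S\cap W_{\ell^\ast}|/|W_{\ell^\ast}|\ge d/2$. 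As $m_{\ell^\ast}\ge s^{L-1}N\to\infty$ and $W_{\ell^\ast}$ agrees with $[s m_{\ell^\ast}, m_{\ell^\ast}]$ up to $O(1)$ integers, this produces infinitely many integers $m$ with $\frac{|S\cap[sm,m]|}{(1-s)m}\ge \frac{d}{2}-o(1)$, so $\limsup_n \frac{|S\cap[ns,n]|}{(1-s)n}\ge \frac{d}{2}$; since $s$ was arbitrary, $\mathfrak{p}^\star(S)\ge d/2>0$, i.e.\ $S\notin\mathcal{I}_{\mathfrak{p}}$. This gives $\mathcal{I}_{\mathfrak{p}}\subseteq\mathcal{I}_0$, completing the proof.

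The only genuinely delicate point is this reverse inclusion, and inside it the choice to chop $(\lfloor s^L N\rfloor, N]$ geometrically rather than arithmetically: the geometric blocks are exactly of the shape $[sm,m]$ demanded by the definition of $\mathfrak{p}^\star$, and their left endpoints $m_{\ell^\ast}$ tend to infinity with $N$, which is what lets the local estimate survive the outer $\limsup_n$. The remaining work — tracking the floor functions and the $O(1)$ discrepancies between $W_\ell$ and $[s m_\ell, m_\ell]$ — is routine and does not disturb any of the limits involved.
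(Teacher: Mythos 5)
Your proof is correct, but it takes a genuinely different route from the paper. The paper invokes \cite[Theorem 4.3]{MR3278191}, namely that $\mathfrak p^\star$ is the pointwise limit of the net $(\mathrm{d}_\alpha^\star)_{\alpha}$, extracts from $Y\notin\mathcal{I}_{\mathfrak p}$ a single $\alpha>0$ with $\mathrm{d}_\alpha^\star(Y)>0$, and then transfers each thinnability property through the corresponding $\mathcal{I}_\alpha$ via Proposition~\ref{prop:ialphaideals}. You instead prove the set-theoretic identity $\mathcal{I}_{\mathfrak p}=\mathcal{I}_0$ and reduce to the single ideal $\mathcal{I}_0$; since thinnability is a property of the ideal as a family of sets, this settles the corollary. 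Both halves of your identity check out: the inclusion $\mathcal{I}_0\subseteq\mathcal{I}_{\mathfrak p}$ is indeed immediate since $(1-s)$ is a fixed positive constant in the inner $\limsup$, and your estimate $\mathfrak p^\star(S)\ge\tfrac12\,\mathrm{d}_0^\star(S)$ via the geometric blocks $(\lfloor s^{\ell+1}N\rfloor,\lfloor s^{\ell}N\rfloor]$, the truncation at $s^L<d/2$ to keep the surviving left endpoints tending to infinity, and the mediant inequality is sound (the sharp bound is in fact $\mathfrak p^\star\ge\mathrm{d}_0^\star$, but the lossy constant costs nothing here). One cosmetic slip: $\mathrm{d}_0^\star(S)=d$ only guarantees infinitely many $N$ with $|S\cap[1,N]|\ge(d-\varepsilon)N$, not $\ge dN$; running the argument with $d-\varepsilon$ and letting $\varepsilon\to0$ fixes this without any change to the structure. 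The trade-off between the two proofs: yours is self-contained and elementary (no external citation) and yields the strictly stronger conclusion that the P\'olya ideal \emph{coincides} with the asymptotic density ideal, so every property of $\mathcal{I}_0$ transfers, not just thinnability; the paper's argument is shorter on the page and its template would still work in settings where the limiting density genuinely has a smaller family of null sets than the approximating ones.
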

\begin{proof}
The upper P\'olya density $\mathfrak p^\ast$ is the pointwise limit of the real net of the upper $\alpha$-densities $\mathrm{d}_\alpha^\star$ defined in \eqref{eq:defalpha}, see \cite[Theorem 4.3]{MR3278191}. 

Fix infinite sets $X,Y \subseteq \mathbf{N}$ with canonical enumerations $\{x_n: n \in \mathbf{N}\}$ and $\{y_n: n \in \mathbf{N}\}$, respectively, such that $Y \notin \mathcal{I}_\mathfrak{p}$. Then, there exists $\alpha>0$ such that $\mathrm{d}_\alpha^\star(Y)>0$ and, thanks to Proposition \ref{prop:ialphaideals}, we get $\mathrm{d}_\alpha^\star(X)>0$ as well. This implies that $X\notin \mathcal{I}_\mathfrak{p}$. Other properties can be shown similarly.
\end{proof}

Lastly, it is worth noting that there exist summable ideals which are not weakly thinnable: for instance, let $\mathcal{I}_f$ be the ideal defined by $f(2n)=1$ and $f(2n-1)=0$ for all $n \in \mathbf{N}$, so that 
$$
\mathcal{I}_f=\{I\subseteq \mathbf{N}: I \cap 2\mathbf{N} \in \mathrm{Fin}\}.
$$
Set $A:=\mathbf{N}\setminus \{1\}$ and $B:=2\mathbf{N}$. Then, $A$ has asymptotic density $1$, $B\notin \mathcal{I}_f$, and $A_B=2\mathbf{N}+1 \in \mathcal{I}_f$. Therefore $\mathcal{I}_f$ is not weakly thinnable.

\section{Main Results}\label{sec:mainresult}

Consider the natural bijection between the collection of all subsequences $(x_{n_k})$ of $(x_n)$ and real numbers $\omega \in (0,1]$ with non-terminating dyadic expansion 
$$
\sum_{i\ge 1}d_i(\omega)2^{-i},
$$
where $d_i(\omega)=1$ if $i=n_k$, for some integer $k$, and $d_i(\omega)=0$ otherwise, cf. \cite[Appendix A31]{MR1324786} and \cite{MR1260176}. Accordingly, for each $\omega \in (0,1]$, denote by $x \upharpoonright \omega$ the subsequence of $(x_n)$ obtained by omitting $x_i$ if and only if $d_i(\omega)=0$.

Moreover, let $\lambda: \mathscr{M}\to \mathbf{R}$ denote the Lebesgue measure, where $\mathscr{M}$ stands for the completion of the Borel $\sigma$-algebra on $(0,1]$. Our main result follows:
\begin{thm}\label{thm:main}
Let $\mathcal{I}$ be a thinnable ideal and $(x_n)$ be a sequence taking values in a first countable space $X$ where all closed sets are separable. Then 
$$
\lambda\left(\left\{\omega \in (0,1]: \Gamma_x(\mathcal{I})=\Gamma_{x\upharpoonright \omega}(\mathcal{I})\right\}\right)=1. 
$$
\end{thm}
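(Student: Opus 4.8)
The plan is to establish the two inclusions $\Gamma_{x\upharpoonright\omega}(\mathcal{I})\subseteq\Gamma_x(\mathcal{I})$ and $\Gamma_x(\mathcal{I})\subseteq\Gamma_{x\upharpoonright\omega}(\mathcal{I})$ separately, each for all $\omega$ outside a Lebesgue-null set. Write $B(\omega):=\{n\in\mathbf N: d_n(\omega)=1\}=\{b_1<b_2<\cdots\}$, so that $(x\upharpoonright\omega)_n=x_{b_n}$ and, for an open set $U$, $\{n:(x\upharpoonright\omega)_n\in U\}=\{n:b_n\in E_U\}$ where $E_U:=\{m:x_m\in U\}$. Two standard facts will be used throughout. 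First, both $\Gamma_x(\mathcal{I})$ and $\Gamma_{x\upharpoonright\omega}(\mathcal{I})$ are closed, since the complement of a set of $\mathcal{I}$-cluster points is the union of the open sets $U$ with $\{n:x_n\in U\}\in\mathcal{I}$ (resp.\ with $x\upharpoonright\omega$). Second, under $\lambda$ the digits $d_n$ are i.i.d.\ Bernoulli$(\nicefrac12)$, so by the strong law of large numbers, for almost every $\omega$ the set $B(\omega)$ is infinite and has asymptotic density $\nicefrac12\neq 0$; more generally, for each fixed infinite $E=\{e_1<e_2<\cdots\}$ the set $\{i: e_i\in B(\omega)\}$ has density $\nicefrac12$ for almost every $\omega$ (being the density of the i.i.d.\ subsequence $(d_{e_i})$).

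For $\Gamma_{x\upharpoonright\omega}(\mathcal{I})\subseteq\Gamma_x(\mathcal{I})$, the crucial point is: \emph{if $d(B(\omega))\neq 0$, then $\{n:b_n\in E\}\in\mathcal{I}$ for every $E\in\mathcal{I}$.} Indeed, put $F:=\{n:b_n\in E\}$ and suppose $F\notin\mathcal{I}$, hence $F$ is infinite; applying weak thinnability with $A:=B(\omega)$ (which has nonzero density) and $B:=F$ gives $A_B\notin\mathcal{I}$, but $A_B=\{b_n:n\in F\}=B(\omega)\cap E\subseteq E\in\mathcal{I}$, a contradiction. Granting this, fix $\omega$ with $d(B(\omega))\neq 0$ and any $\ell\notin\Gamma_x(\mathcal{I})$: choose an open neighbourhood $U$ of $\ell$ with $E_U\in\mathcal{I}$; then $\{n:(x\upharpoonright\omega)_n\in U\}=\{n:b_n\in E_U\}\in\mathcal{I}$, so $\ell\notin\Gamma_{x\upharpoonright\omega}(\mathcal{I})$. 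This gives the inclusion for a co-null set of $\omega$, uniformly over all $\ell$, and uses no separability.

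For the reverse inclusion $\Gamma_x(\mathcal{I})\subseteq\Gamma_{x\upharpoonright\omega}(\mathcal{I})$ I first prove the positive counterpart: \emph{for fixed $E\notin\mathcal{I}$ and almost every $\omega$, $\{n:b_n\in E\}\notin\mathcal{I}$.} Enumerate $E=\{e_1<e_2<\cdots\}$ and set $T=T(\omega):=\{i:e_i\in B(\omega)\}$, which has density $\nicefrac12$ for a.e.\ $\omega$; applying (full) thinnability with $A:=T$ and $B:=E$ shows $B_A=\{e_i:e_i\in B(\omega)\}=B(\omega)\cap E\notin\mathcal{I}$, and since $b_m\ge m$ for every $m$ we have $\{n:b_n\in E\}\le B(\omega)\cap E$ in the sense of canonical enumerations, so the monotonicity clause of thinnability forces $\{n:b_n\in E\}\notin\mathcal{I}$. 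Now $\Gamma_x(\mathcal{I})$ is a closed subset of $X$, hence separable (if it is empty there is nothing to prove); pick a countable dense subset $\{\ell_j:j\in\mathbf N\}$ and, by first countability, a decreasing countable neighbourhood base $\{U_{j,k}:k\in\mathbf N\}$ at each $\ell_j$. Each $E_{j,k}:=\{m:x_m\in U_{j,k}\}$ lies outside $\mathcal{I}$ because $\ell_j\in\Gamma_x(\mathcal{I})$, so discarding the countable union of the associated null sets we obtain, for the remaining $\omega$, that $\{n:(x\upharpoonright\omega)_n\in U_{j,k}\}\notin\mathcal{I}$ for all $j,k$; since $\{U_{j,k}\}_k$ is a neighbourhood base at $\ell_j$, this says $\ell_j\in\Gamma_{x\upharpoonright\omega}(\mathcal{I})$ for every $j$, and as $\Gamma_{x\upharpoonright\omega}(\mathcal{I})$ is closed, $\Gamma_x(\mathcal{I})=\overline{\{\ell_j:j\in\mathbf N\}}\subseteq\Gamma_{x\upharpoonright\omega}(\mathcal{I})$.

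Intersecting the two co-null sets of $\omega$ completes the proof. I expect the delicate point to be the first inclusion: the ``bad'' points $\ell\notin\Gamma_x(\mathcal{I})$ cannot in general be reduced to a countable family (the witnessing open sets need not form a countable base), so one really needs the assertion ``$\{n:b_n\in E\}\in\mathcal{I}$ for all $E\in\mathcal{I}$'' to hold for a single co-null set of $\omega$ at once --- which it does, precisely because almost every $\omega$ has $d(B(\omega))=\nicefrac12\neq 0$, letting $B(\omega)$ play the role of the density-positive set in the definition of thinnability. The remaining points (measurability of the exceptional sets, the i.i.d.\ structure of the digits, and the inequality $b_m\ge m$) are routine.
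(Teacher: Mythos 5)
Your proof is correct and follows essentially the same route as the paper's: the easy inclusion $\Gamma_{x\upharpoonright\omega}(\mathcal{I})\subseteq\Gamma_x(\mathcal{I})$ via weak thinnability applied to the density-$\nicefrac12$ index set $B(\omega)$ of normal $\omega$ (you phrase it contrapositively, the paper by contradiction, but the application of $A_B\notin\mathcal{I}$ is the same), and the reverse inclusion by the identical scheme of a countable dense subset of the closed set $\Gamma_x(\mathcal{I})$, countable neighbourhood bases, the subsequence version of Borel's normal number theorem for each $E\notin\mathcal{I}$, the $B_A$ clause of thinnability, and the monotonicity clause via $b_m\ge m$. No gaps.
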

\begin{proof}
Let $\Omega$ be the set of normal numbers, that is,
\begin{equation}\label{eq:normal}
\Omega:=\left\{\omega \in (0,1]: \lim_{n\to \infty}\frac{1}{n}\sum_{i=1}^n d_i(\omega)=\frac{1}{2}\right\}.
\end{equation}
It follows by Borel's normal number theorem \cite[Theorem 1.2]{MR1324786} that $\Omega \in \mathscr{M}$ and $\lambda(\Omega)=1$. Then, it is claimed that
\begin{equation}\label{eq:claim1}
\Gamma_{x\upharpoonright \omega}(\mathcal{I})\subseteq \Gamma_x(\mathcal{I})\,\,\,\text{ for all }\omega \in \Omega.
\end{equation}
To this aim, fix $\omega \in \Omega$ and denote by $(x_{n_k})$ the subsequence $x\upharpoonright \omega$. Let us suppose for the sake of contradiction that $\Gamma_{x\upharpoonright \omega}(\mathcal{I})\setminus \Gamma_x(\mathcal{I}) \neq \emptyset$ and fix a point $\ell$ therein. Then, the set of indexes $\{n_k: k\in \mathbf{N}\}$ has asymptotic density $\nicefrac{1}{2}$ and, for each neighborhood $U$ of $\ell$, it holds $\{k: x_{n_k} \in U\} \notin \mathcal{I}$. This implies 
$$
\{n: x_n \in U\} \supseteq \{n_k: x_{n_k} \in U\} \notin \mathcal{I}
$$
by the hypothesis that $\mathcal{I}$ is, in particular, weakly thinnable. Therefore $\{n: x_n \in U\} \notin \mathcal{I}$, which is a contradiction since $\ell$ would be also a $\mathcal{I}$-cluster point of $x$. This proves \eqref{eq:claim1}.

To complete the proof, it is sufficient to show that
\begin{equation}\label{eq:claim2}
\lambda\left(\left\{\omega \in (0,1]: \Gamma_x(\mathcal{I}) \subseteq \Gamma_{x\upharpoonright \omega}(\mathcal{I})\right\}\right)=1.
\end{equation}
This is clear if $\Gamma_x(\mathcal{I})$ is empty. Otherwise, note that $\Gamma_x(\mathcal{I})$ is closed by \cite[Lemma 3.1(iv)]{LMxyz}, hence there exists a non-empty countable dense subset $L$. 
Fix $\ell \in L$ and let $(U_m)$ be a decreasing local base of neighborhoods at $\ell$. Fix also $m \in \mathbf{N}$ and define $I:=\{n: x_n \in U_m\}$ which does not belong to $\mathcal{I}$; in particular, $I$ is infinite and we let $\{i_n: n\in \mathbf{N}\}$ be its enumeration. Again by Borel's normal number theorem, 
$$
\Theta(\ell,U_m):=\left\{\omega \in (0,1]: \lim_{n\to \infty}\frac{1}{n}\sum_{j=1}^n d_{i_j}(\omega)=\frac{1}{2}\right\}
$$
belongs to $\mathscr{M}$ and has Lebesgue measure $1$. Fix $\omega$ in the above set and denote by $(x_{n_k})$ the subsequence $x\upharpoonright \omega$. Hence, the set 
$
J:=\{n: i_n \in \{n_k: k \in \mathbf{N}\}\}
$ 
admits asymptotic density $\nicefrac{1}{2}$ and, by the thinnability of $\mathcal{I}$, we get $I_J \notin \mathcal{I}$. Lastly, note that 
$$
\{k: x_{n_k} \in U_m\} = \{k: n_k \in I\} \le \{n_k: n_k \in I\}=I_J.
$$ 
Therefore $\{k: x_{n_k} \in U_m\} \notin \mathcal{I}$. In addition, $\Theta(\ell):=\bigcap_{m\ge 1}\Theta(\ell,U_m)$ belongs to $\mathscr{M}$ and has Lebesgue measure $1$. This implies that 
$$
\lambda\left(\left\{\omega \in (0,1]: \ell \in \Gamma_{x\upharpoonright \omega}(\mathcal{I})\right\}\right)=1.
$$
(See also \cite[Theorem 1]{MR0493027} for the case $\mathcal{I}=\mathrm{Fin}$.) At this point, since $L$ is countable, we get  
$
\lambda\left(\left\{\omega \in (0,1]: L\subseteq \Gamma_{x\upharpoonright \omega}(\mathcal{I})\right\}\right)=1.
$ 
Claim \eqref{eq:claim2} follows by the fact that also $\Gamma_{x\upharpoonright \omega}(\mathcal{I})$ is closed by \cite[Lemma 3.1(iv)]{LMxyz}, so that each of these $\Gamma_{x\upharpoonright \omega}(\mathcal{I})$ contains the closure of $L$, i.e., $\Gamma_x(\mathcal{I})$.
\end{proof}

\begin{rmk}\label{rmk:separablemetric}
Separable metric spaces $X$ satisfy the hypotheses of Theorem \ref{thm:main}. Indeed, $X$ is first countable and every closed subset $F$ of $X$ is separable. To prove the latter, let $A$ be a countable dense subset of $X$ and note that
$$
\mathscr{F}:=\{B(a,r) \cap F: a \in A, 0<r \in \mathbf{Q}\}\setminus \{\emptyset\}
$$
is a base for $F$, where $B(a,r)$ is the open ball with center $a$ and radious $r$. Then, a set which picks one point for every set in $\mathscr{F}$ is a countable dense subset of $F$.
\end{rmk}

As a consequence of Proposition \ref{prop:ialphaideals},
Theorem \ref{thm:main}, and Remark \ref{rmk:separablemetric}, we obtain:
\begin{cor}\label{cor:alphadensity}
Let $x$ be a sequence taking values in a separable metric space. Then the set of statistical cluster points of $x$ is equal to the set of statistical cluster points of almost all its subsequences \textup{(}in the sense of Lebesgue measure\textup{)}.
\end{cor}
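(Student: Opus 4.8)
The plan is to recognize this as the specialization of Theorem \ref{thm:main} to the ideal $\mathcal{I}_0$ of sets of zero asymptotic density. By definition, the statistical cluster points of $x$ are exactly the $\mathcal{I}_0$-cluster points, i.e., the elements of $\Gamma_x(\mathcal{I}_0)$, and likewise for any subsequence $x\upharpoonright\omega$. So the assertion to be proved is precisely that $\lambda(\{\omega \in (0,1]: \Gamma_x(\mathcal{I}_0)=\Gamma_{x\upharpoonright\omega}(\mathcal{I}_0)\})=1$.

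First I would observe that $\mathcal{I}_0=\mathcal{I}_\alpha$ for $\alpha=0$, since in \eqref{eq:defalpha} the weights $i^0\equiv 1$ make $\mathrm{d}_0^\star(S)=\limsup_n |S\cap[1,n]|/n$, the ordinary upper asymptotic density. Hence Proposition \ref{prop:ialphaideals} applies and yields that $\mathcal{I}_0$ is thinnable. Next I would invoke Remark \ref{rmk:separablemetric}, which guarantees that a separable metric space $X$ is first countable and has all closed subsets separable, so that the hypotheses of Theorem \ref{thm:main} are met. Applying Theorem \ref{thm:main} with $\mathcal{I}=\mathcal{I}_0$ then gives the claim directly.

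Since every ingredient is already in place, there is no genuine obstacle here; the only point that requires a word of care is the identification $\mathcal{I}_0=\mathcal{I}_{\alpha=0}$, after which the result is immediate from the cited statements. (One could equally cite Proposition \ref{thm:thinnable} for the Erd\H{o}s--Ulam case with $f\equiv 1$, but invoking Proposition \ref{prop:ialphaideals} is the cleanest route.)
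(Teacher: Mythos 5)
Your proof is correct and follows exactly the route the paper takes: the corollary is stated there as a direct consequence of Proposition \ref{prop:ialphaideals}, Theorem \ref{thm:main}, and Remark \ref{rmk:separablemetric}. The identification $\mathcal{I}_0=\mathcal{I}_{\alpha}$ with $\alpha=0$ that you spell out is the right (and only) point needing a word of justification.
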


Similarly, setting $\mathcal{I}=\mathrm{Fin}$, we recover Buck's result \cite{MR0009997}:
\begin{cor}\label{cor:buck}
Let $x$ be a sequence taking values in a separable metric space. Then the set of ordinary limit points of $x$ is equal to the set of ordinary limit points of almost all its subsequences \textup{(}in the sense of Lebesgue measure\textup{)}.
\end{cor}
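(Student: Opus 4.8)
The plan is simply to specialize Theorem \ref{thm:main} to the ideal $\mathcal{I}=\mathrm{Fin}$. First I would check that $\mathrm{Fin}$ is thinnable, as already recorded right after Definition \ref{def:thinn}: a set of non-zero asymptotic density is infinite, the sets $A_B$ and $B_A$ are infinite (hence not in $\mathrm{Fin}$) whenever $A$ is infinite and $B\notin\mathrm{Fin}$, and $X\le Y$ with $Y$ infinite forces $X$ infinite. Thus the hypothesis on the ideal in Theorem \ref{thm:main} is met.

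Next I would observe that, for an arbitrary sequence $(y_n)$ in an arbitrary topological space, the set $\Gamma_y(\mathrm{Fin})$ of its $\mathrm{Fin}$-cluster points coincides with the set of its ordinary limit (accumulation) points: by definition $\ell\in\Gamma_y(\mathrm{Fin})$ precisely when $\{n: y_n\in U\}\notin\mathrm{Fin}$, i.e.\ is infinite, for every neighborhood $U$ of $\ell$, and this is exactly the defining property of an accumulation point of $(y_n)$.

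Finally, since by Remark \ref{rmk:separablemetric} every separable metric space is first countable with all closed subsets separable, Theorem \ref{thm:main} yields $\lambda(\{\omega\in(0,1]: \Gamma_x(\mathrm{Fin})=\Gamma_{x\upharpoonright\omega}(\mathrm{Fin})\})=1$. Applying the previous paragraph to $x$ and to each subsequence $x\upharpoonright\omega$, and recalling that $\omega\mapsto x\upharpoonright\omega$ runs over all subsequences as $\omega$ ranges over the full-measure set of points of $(0,1]$ with non-terminating dyadic expansion, this is precisely the assertion that the set of ordinary limit points of $x$ equals that of almost every subsequence of $x$. I do not expect any genuine obstacle here; the only step deserving a word of care is the identification $\Gamma_y(\mathrm{Fin})=\{\text{ordinary limit points of }y\}$, which is immediate from unwinding the definitions.
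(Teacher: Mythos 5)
Your proposal is correct and matches the paper's approach exactly: the corollary is obtained by specializing Theorem \ref{thm:main} to $\mathcal{I}=\mathrm{Fin}$ (which is thinnable, as noted after Definition \ref{def:thinn}), invoking Remark \ref{rmk:separablemetric} for the topological hypotheses, and identifying $\mathrm{Fin}$-cluster points with ordinary limit points. No issues.
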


Lastly, we recall that a sequence $x=(x_n)$ taking values in topological space $X$ converges (with respect to an ideal $\mathcal{I}$) to $\ell \in X$, shortened as $x \to_{\mathcal{I}} \ell$, if
$$
\{n: x_n \notin U\} \in \mathcal{I}
$$
for all neighborhoods $U$ of $\ell$. 
In this regard, Miller \cite[Theorem 3]{MR1260176} proved that a real sequence $x$ converges statistically to $\ell$, i.e., $x \to_{\mathcal{I}_0}\ell$, if and only if almost all its sequences converge statistically to $\ell$. 

This is extended in the following result. Here, we say that an ideal $\mathcal{I}$ is \emph{invariant} if, for each $A\subseteq \mathbf{N}$ with positive asymptotic density, it holds $A_B \notin \mathcal{I}$ if and only if $B\notin \mathcal{I}$ (in particular, $\mathcal{I}$ is weakly thinnable). This condition is strictly related with the so-called ``property (G)'' defined in \cite{MR3568092}.
\begin{thm}\label{thm:oldmiller}
Let $\mathcal{I}$ be an invariant ideal and $x$ be a sequence taking values in a topological space. Then $x \to_\mathcal{I} \ell$ if and only if
$$
\lambda\left(\left\{\,\omega \in (0,1]: x\upharpoonright \omega \to_\mathcal{I} \ell \,\right\}\right)=1.
$$
\end{thm}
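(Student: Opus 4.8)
The plan is to prove the two implications separately, in each case reducing the statement about the whole sequence to a statement about the single index set $I_U := \{n : x_n \notin U\}$ ranging over a countable local base at $\ell$, and then invoking Borel's normal number theorem to control Lebesgue-almost every subsequence simultaneously.

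For the forward direction, suppose $x \to_{\mathcal I} \ell$, so that for every neighbourhood $U$ of $\ell$ we have $I_U \in \mathcal I$. Fix such a $U$ and write $J := \{n : x_n \in U\} = \mathbf N \setminus I_U$; for a subsequence coded by $\omega$ with index set $N(\omega) := \{n_k : k\}$, the ``bad'' set for the subsequence is $\{k : x_{n_k} \notin U\} = \{k : n_k \in I_U\}$. First I would observe that if $I_U$ is finite this set is finite, hence in $\mathcal I$, for every $\omega$; if $I_U$ is infinite, enumerate it as $\{i_1 < i_2 < \cdots\}$ and let $\Theta(\ell,U)$ be the set of $\omega$ for which $\frac1n\sum_{j\le n} d_{i_j}(\omega) \to \tfrac12$, which by Borel's theorem has measure $1$. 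For $\omega \in \Theta(\ell,U)$, the set $\{k : n_k \in I_U\}$ is order-isomorphic (via $x \upharpoonright \omega$) to the set $\{i_n : i_n \in N(\omega)\} = (I_U)_{J'}$ for the density-$\tfrac12$ set $J' := \{n : i_n \in N(\omega)\}$; since $I_U \in \mathcal I$ and $\mathcal I$ is an ideal closed under subsets, and since $(I_U)_{J'} \subseteq I_U$, this set lies in $\mathcal I$. Intersecting over a countable local base $(U_m)$ at $\ell$ and over nothing else (there is only one point $\ell$), the set $\Theta(\ell) := \bigcap_m \Theta(\ell,U_m)$ has full measure, and for $\omega$ there we get $x \upharpoonright \omega \to_{\mathcal I} \ell$.

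For the converse, suppose $x \not\to_{\mathcal I} \ell$, so there is a neighbourhood $U$ with $I_U = \{n : x_n \notin U\} \notin \mathcal I$; in particular $I_U$ is infinite, enumerate it $\{i_1 < i_2 < \cdots\}$. Here is where \emph{invariance}, rather than mere weak thinnability, is needed: take the full-measure set $\Theta(\ell,U)$ of $\omega$ for which $\frac1n\sum_{j\le n}d_{i_j}(\omega)\to\tfrac12$; for such $\omega$, the set $J' := \{n : i_n \in N(\omega)\}$ has asymptotic density $\tfrac12 > 0$, and the bad set for the subsequence $x\upharpoonright\omega$ with respect to $U$ is $\{k : n_k \in I_U\}$, which equals $(I_U)_{J'}$ up to the order isomorphism. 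Since $I_U \notin \mathcal I$ and $\mathcal I$ is invariant, $(I_U)_{J'} \notin \mathcal I$, so $x\upharpoonright\omega \not\to_{\mathcal I}\ell$ for every $\omega \in \Theta(\ell,U)$; hence the set of good $\omega$ is contained in the null complement of $\Theta(\ell,U)$ and has measure $0$, a fortiori not $1$.

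The main obstacle is bookkeeping the order isomorphism correctly so that ``the set of positions $k$ at which the subsequence lies outside $U$'' is identified with the right image set to which thinnability/invariance applies — in the forward direction we only need the subset-monotonicity of an ideal (so weak thinnability is not even used there), while in the reverse direction we genuinely need $B \notin \mathcal I \Rightarrow A_B \notin \mathcal I$ for a density-$\tfrac12$ set $A$, i.e. the nontrivial half of invariance. A minor secondary point to get right is that, unlike in Theorem~\ref{thm:main}, there is no countable dense set $L$ to range over and no issue of closedness of the cluster set: here $\ell$ is a single fixed point, so one only intersects over the countable local base $(U_m)$, and the argument is cleaner. I would close by remarking that this recovers Miller's theorem by taking $\mathcal I = \mathcal I_0$, which is invariant.
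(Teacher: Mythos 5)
There is a genuine gap, and it sits at the heart of both directions: the identification of the set of \emph{positions} $\{k: x_{n_k}\notin U\}=\{k: n_k\in I_U\}$ with the set of \emph{indices} $\{n_k: n_k\in I_U\}=N(\omega)\cap I_U$ ``up to order isomorphism''. These are two different subsets of $\mathbf{N}$, and no ideal beyond $\mathrm{Fin}$ is closed under order isomorphism (e.g.\ $\{2^n: n\in\mathbf{N}\}$ and $2\mathbf{N}$ are order isomorphic, but only the first lies in $\mathcal{I}_0$). Passing between the two sets is precisely what weak thinnability/invariance is for: writing $A:=N(\omega)$ (asymptotic density $\nicefrac{1}{2}$) and $B:=\{k: n_k\in I_U\}$, one has $A_B=N(\omega)\cap I_U$, and it is invariance applied to this $A$ that converts membership of $A_B$ in $\mathcal{I}$ into membership of $B$, and conversely. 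In particular, your claim that the forward direction needs only subset-monotonicity is false: you correctly obtain $A_B\subseteq I_U\in\mathcal{I}$, but to conclude $B\in\mathcal{I}$ you must invoke $A_B\in\mathcal{I}\Rightarrow B\in\mathcal{I}$, which is exactly (the contrapositive of) weak thinnability. The paper's own non--weakly-thinnable example $\mathcal{I}_f=\{S: S\cap 2\mathbf{N}\in\mathrm{Fin}\}$, together with the sequence $x_n:=1$ for $n$ even and $x_n:=0$ for $n$ odd, shows the forward implication genuinely fails without this hypothesis: $x\to_{\mathcal{I}_f}1$, yet for almost every $\omega$ the position set $\{k: n_k \text{ odd}\}$ contains infinitely many even integers and hence is not in $\mathcal{I}_f$.

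Your converse has a second defect: you apply invariance to $(I_U)_{J'}$ with $I_U$ in the role of the positive-density set, but $I_U\notin\mathcal{I}$ does not give $I_U$ positive asymptotic density, so the definition does not apply, and the intended conclusion ($N(\omega)\cap I_U\notin\mathcal{I}$ for a.e.\ $\omega$) is left unproved. You also have the two halves of invariance attributed to the wrong implications: weak thinnability ($B\notin\mathcal{I}\Rightarrow A_B\notin\mathcal{I}$) is what the \emph{forward} direction uses, while the half that goes beyond it ($B\in\mathcal{I}\Rightarrow A_B\in\mathcal{I}$) is what the converse needs. The paper's converse is structured quite differently and avoids your difficulty: since $\lambda(\Omega\cap(1-\Omega))=1$, one can pick a single $\omega$ for which both $x\upharpoonright\omega$ and $x\upharpoonright(1-\omega)$ are $\mathcal{I}$-convergent to $\ell$; the two index sets partition $\mathbf{N}$, so $\{n: x_n\notin U\}$ is the union of two sets of the form $A_B$ with $B\in\mathcal{I}$ and $A$ of density $\nicefrac{1}{2}$, each of which lies in $\mathcal{I}$ by invariance. (Your contrapositive strategy could be repaired by observing that $N(\omega)\cap I_U$ and $N(1-\omega)\cap I_U$ partition $I_U\notin\mathcal{I}$, so at least one of them is outside $\mathcal{I}$ for every $\omega$ and the set of ``good'' $\omega$ cannot have full measure --- but that is essentially the paper's argument in disguise, not the one you wrote.)
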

\begin{proof}
First, let us suppose that $x \to_\mathcal{I} \ell$ and let $U$ be a neighborhood of $\ell$. 
Let $\Omega$ be set of normal numbers defined in \eqref{eq:normal}, fix $\omega \in \Omega$, and denote by $(x_{n_k})$ the subsequence $x\upharpoonright \omega$. Then $I:=\{n:x_n \notin U\} \in \mathcal{I}$ and $A:=\{n_k:k \in \mathbf{N}\}$ has asymptotic density $\nicefrac{1}{2}$. Define $B:=\{k: x_{n_k}\notin U\}=\{k: n_k \in I\}$. Since $\mathcal{I}$ is, in particular, weakly thinnable and $A_B=\{n_k: x_{n_k} \notin U\} \in \mathcal{I}$, it follows that $B \in \mathcal{I}$, i.e., $x\upharpoonright \omega \to_{\mathcal{I}} \ell$.

Conversely, 
note that $\lambda(\Omega \cap (1-\Omega))=1$. Hence, there exists $\omega \in \Omega$ such that $x\upharpoonright \omega \to_\mathcal{I} \ell$ and $x\upharpoonright (1-\omega) \to_\mathcal{I} \ell$. It easily follows that $x \to_\mathcal{I} \ell$. Indeed, denoting by $(x_{n_k})$ and $(x_{m_r})$ the subsequences $x\upharpoonright \omega$ and $x\upharpoonright (1-\omega)$, respectively, we have that, for each neighborhood $U$ of $\ell$, it holds $\{k: x_{n_k} \notin U\} \in \mathcal{I}$ and $\{r: x_{m_r} \notin U\} \in \mathcal{I}$. 
Since $\{n_k: k \in \mathbf{N}\}$ and $\{m_r: r \in \mathbf{N}\}$ form a partition of $\mathbf{N}$, then
$$
\{n: x_n \notin U\}=\{n_k: x_{n_k} \notin U\} \cup \{m_r: x_{m_r} \notin U\}.
$$
The claim follows by the hypothesis that $\mathcal{I}$ is invariant.
\end{proof}

It is not possible to extend Theorem \ref{thm:oldmiller} on the class of all ideals: 
indeed, it has been shown in \cite[Example 2]{MR3568092} that there exists an ideal $\mathcal{I}$ and a real sequence $x$ such that $x \to_{\mathcal{I}} \ell$ and, on the other hand, $\lambda\left(\left\{\,\omega \in (0,1]: x\upharpoonright \omega \to_\mathcal{I} \ell \,\right\}\right)=0$.


\subsection*{Acknowledgments} 
The author is grateful to Piotr Miska (Jagiellonian University, PL) and Marek Balcerzak (\L{}\'{o}d\'{z} University of Technology, PL) for several useful comments.

\subsection*{Note added in proof} It turns out that the topological analogue of Theorem \ref{thm:main} is quite different, providing a non-analogue between measure and category. Indeed, it has been shown in \cite{Leo17c} that, if $x$ is a sequence in a separable metric space, then $\{\omega \in (0,1]: \Gamma_x(\mathcal{I}_0)=\Gamma_{x \upharpoonright \omega}(\mathcal{I}_0)\}$ is not a first Baire category set if and only if every ordinary limit point of $x$ is also a statistical cluster point of $x$, that is, $\Gamma_x(\mathrm{Fin})=\Gamma_x(\mathcal{I}_0)$.

\bibliographystyle{amsplain}
\bibliography{ideale}

\providecommand{\MR}[1]{}
\providecommand{\bysame}{\leavevmode\hbox to3em{\hrulefill}\thinspace}
\providecommand{\MR}{\relax\ifhmode\unskip\space\fi MR }
\providecommand{\MRhref}[2]{%
  \href{http://www.ams.org/mathscinet-getitem?mr=#1}{#2}
}
\providecommand{\href}[2]{#2}
\begin{thebibliography}{10}

\bibitem{MR0010616}
R.~P. Agnew, \emph{Summability of subsequences}, Bull. Amer. Math. Soc.
  \textbf{50} (1944), 596--598. \MR{0010616}

\bibitem{MR3568092}
M.~Balcerzak, Sz. G\l{a}b, and A.~Wachowicz, \emph{Qualitative properties of
  ideal convergent subsequences and rearrangements}, Acta Math. Hungar.
  \textbf{150} (2016), no.~2, 312--323. \MR{3568092}

\bibitem{MR1324786}
P.~Billingsley, \emph{Probability and measure}, third ed., Wiley Series in
  Probability and Mathematical Statistics, John Wiley \& Sons, Inc., New York,
  1995, A Wiley-Interscience Publication. \MR{1324786}

\bibitem{MR1726779}
N.~Bourbaki, \emph{General topology. {C}hapters 1--4}, Elements of Mathematics
  (Berlin), Springer-Verlag, Berlin, 1998, Translated from the French, Reprint
  of the 1989 English translation. \MR{1726779}

\bibitem{MR0009997}
R.~C. Buck, \emph{Limit points of subsequences}, Bull. Amer. Math. Soc.
  \textbf{50} (1944), 395--397. \MR{0009997}

\bibitem{MR0316930}
D.~F. Dawson, \emph{Summability of subsequences and stretchings of sequences},
  Pacific J. Math. \textbf{44} (1973), 455--460. \MR{0316930}

\bibitem{MR2463821}
G.~Di~Maio and L.~D.~R. Ko\v{c}inac, \emph{Statistical convergence in
  topology}, Topology Appl. \textbf{156} (2008), no.~1, 28--45. \MR{2463821}

\bibitem{MR1181163}
J.~A. Fridy, \emph{Statistical limit points}, Proc. Amer. Math. Soc.
  \textbf{118} (1993), no.~4, 1187--1192. \MR{1181163}

\bibitem{MR1416085}
J.~A. Fridy and C.~Orhan, \emph{Statistical limit superior and limit inferior},
  Proc. Amer. Math. Soc. \textbf{125} (1997), no.~12, 3625--3631. \MR{1416085}

\bibitem{Leo17c}
P.~Leonetti, \emph{Limit points of subsequences}, preprint, last updated: Dec
  31, 2017 (\href{http://arxiv.org/abs/1801.00343}{arXiv:1801.00343}).

\bibitem{LMxyz}
P.~Leonetti and F.~Maccheroni, \emph{Ideal cluster points in topological
  spaces}, preprint, last updated: Nov 22, 2017
  (\href{http://arxiv.org/abs/1707.03281}{arXiv:1707.03281}).

\bibitem{MR3278191}
P.~Letavaj, L.~Mi\v{s}\'\i{k}, and M.~Sleziak, \emph{Extreme points of the set
  of density measures}, J. Math. Anal. Appl. \textbf{423} (2015), no.~2,
  1150--1165. \MR{3278191}

\bibitem{MR0003208}
N.~Levinson, \emph{Gap and {D}ensity {T}heorems}, American Mathematical Society
  Colloquium Publications, v. 26, American Mathematical Society, New York,
  1940. \MR{0003208}

\bibitem{MR1656470}
M.~Marinacci, \emph{An axiomatic approach to complete patience and time
  invariance}, J. Econom. Theory \textbf{83} (1998), no.~1, 105--144.
  \MR{1656470}

\bibitem{MR1260176}
H.~I. Miller, \emph{A measure theoretical subsequence characterization of
  statistical convergence}, Trans. Amer. Math. Soc. \textbf{347} (1995), no.~5,
  1811--1819. \MR{1260176}

\bibitem{Miller18}
H.~I. Miller and L.~Miller-Wan~Wieren, \emph{Statistical cluster point sets for
  almost all subsequences are equal}, Hacet. J. Math. Stat., to appear.

\bibitem{MR3469337}
\bysame, \emph{Some statistical cluster point theorems}, Hacet. J. Math. Stat.
  \textbf{44} (2015), no.~6, 1405--1409. \MR{3469337}

\bibitem{MR1924673}
H.~I. Miller and C.~Orhan, \emph{On almost convergent and statistically
  convergent subsequences}, Acta Math. Hungar. \textbf{93} (2001), no.~1-2,
  135--151. \MR{1924673}

\bibitem{MR1545027}
G.~P\'olya, \emph{Untersuchungen \"uber {L}\"ucken und {S}ingularit\"aten von
  {P}otenzreihen}, Math. Z. \textbf{29} (1929), no.~1, 549--640. \MR{1545027}

\bibitem{MR0493027}
M.~B. Rao, K.~P. S.~B. Rao, and B.~V. Rao, \emph{Remarks on subsequences,
  subseries and rearrangements}, Proc. Amer. Math. Soc. \textbf{67} (1977),
  no.~2, 293--296. \MR{0493027}

\bibitem{MR1192311}
E.~K. van Douwen, \emph{Finitely additive measures on {${\bf N}$}}, Topology
  Appl. \textbf{47} (1992), no.~3, 223--268. \MR{1192311}

\end{thebibliography}

%
%

\end{document}